\newtheorem{theorem}{Theorem}[section]
\newtheorem{lemma}[theorem]{Lemma}
\theoremstyle{definition}
\newtheorem{remark}[theorem]{Remark}
\setlist[enumerate]{itemsep=0pt, label=$(\mathrm{\roman*})$, topsep=1pt}
\setlist[description]{itemsep=0pt, topsep=1pt}
\setlist[itemize]{labelindent=\parindent,leftmargin=*,align=left, itemsep=0pt,topsep=2pt}
\newcommand\enclosebox[2]{%
  \BeforeBeginEnvironment{#1}{\begin{#2}}%
  \AfterEndEnvironment{#1}{\end{#2}}%
}
\newcommand{\Aut}{\operatorname{Aut}}
\newcommand{\C}{\mathbb{C}}
\newcommand{\Cf}{\textit{cf.}\;}
\newcommand{\Cl}{\operatorname{Cl}}
\newcommand{\Ebar}{\ol{E}}
\newcommand{\Ebarns}{\Ebar_{\ns}}
\newcommand{\Ehat}{\wh{E}}
\newcommand{\F}{\mathbb{F}}
\newcommand{\Fp}{\F_p}
\newcommand{\Fl}{\F_l}
\newcommand{\Gal}{\operatorname{Gal}}
\newcommand{\Ie}{\textit{i.e.},\ }
\newcommand{\K}{\mathcal{K}}
\renewcommand{\L}{\mathfrak{L}}
\renewcommand{\l}{\mathfrak{l}} % ell
\newcommand{\ns}{\mathrm{ns}}
\renewcommand{\O}{\mathcal{O}}
\newcommand{\onto}[1]{\stackrel{#1}{\to}}
\newcommand{\ol}[1]{\overline{#1}}
\newcommand{\ord}{\operatorname{ord}}
\newcommand{\p}{\mathfrak{p}}
\renewcommand{\P}{\mathfrak{P}}
\newcommand{\Phii}{\Phi^{(i)}}
\newcommand{\Q}{\mathbb{Q}}
\newcommand{\Qbar}{\ol{\Q} }
\newcommand{\Qp}{\mathbb{Q}_{p}}
\newcommand{\Ql}{\Q_l}
\newcommand{\tor}{\mathrm{tor}}
\newcommand{\wt}[1]{\widetilde{#1}}
\newcommand{\wh}[1]{\widehat{#1}}
\newcommand{\Z}{\mathbb{Z}}
\newcommand{\Zp}{\mathbb{Z}_p}
\newcommand{\Zl}{\mathbb{Z}_l}
\newcommand\sn{\smallskip\noindent}
\newcommand{\Redp}{$(\mathbf{Red}_p)$}
\newcommand{\Redl}{$(\mathbf{Red}_l)$}
\newcommand{\Tor}{$(\mathbf{Tor})$}
\newcommand{\Full}{$(\mathbf{Full})$}
\newcommand{\Disc}{$(\mathbf{Disc})$}
\title{Local torsion primes and the class numbers associated to an elliptic curve over $\Q$}
\author{Toshiro Hiranouchi}
\begin{document}
\pagenumbering{arabic}
\maketitle

\begin{abstract}
Using the rank of the Mordell-Weil group $E(\Q)$ 
of an elliptic curve $E$ over $\Q$,  
we give a lower bound   
of the class number 
of the number field $\Q(E[p^n])$ 
generated by $p^n$-division points of $E$ 
when the curve $E$ does not possess a $p$-adic point of order $p$\,: $E(\Qp)[p] =0$.  

\sn
Key words: Elliptic curves, and Class number\\
MSC2010: 11R29, 11G05
\end{abstract}

\section{Introduction}
For an elliptic curve $E$ over $\Q$ with complex multiplication 
(abbreviated as CM in the following) such that 
$\operatorname{End}_{\C}(E) = \O_F$, 
the ring of integers of an imaginary quadratic field $F$. 
When $E$ has good ordinary reduction at $p>2$,  
the prime $p$ splits completely in $F$ as $p = \pi \ol{\pi}$ where $\pi \in \O_F$ 
and $\ol{\pi}$ is the complex conjugation of $\pi$. 
Let $F_n := F(E[\pi^{n}])$ be the field generated by $\pi^n$-torsion points of $E$ over $F$. 
The extension 
$F_{\infty} := \bigcup_n F_n$ of $F_1$ is a $\Zp$-extension 
so that 
there exist $\lambda,\mu \in \Z_{\ge 0}$ and $\nu \in \Z$ which are 
all independent of $n$ such that 
we have %the $p$-part of the class number of $F_n$ is written as 
\[
	\# \Cl_p(F_n) = p^{\lambda n + \mu p^n + \nu},\quad  \mbox{for $n \gg 0$}, 
\]
where $\Cl_p(F_n)$ is the $p$-Sylow subgroup of the ideal class group of $F_n$. 
It is known that 
the invariant $\lambda$ of the $\Zp$-extension has a lower bound 
\[
	\lambda \ge r-1,
\]
where $r$ is the ($\Z$-)rank of the group of $\Q$-rational points $E(\Q)$ 
(\cite{G}, Sect.~5). 

For an elliptic curve $E$ over $\Q$ which may not have CM and a prime number $p>3$, 
in recent papers \cite{SY15} and \cite{SY},  
Sairaiji and Yamauchi gave 
a lower bound of the class number $\# \Cl_p(K_n)$ 
in terms of the rank of $E(\Q)$ 
associated to the field $K_n := \Q(E[p^n])$ 
generated by $p^n$-torsion points $E[p^n] := E(\Qbar)[p^n]$ 
under the following conditions\footnote{
In \cite{SY15}, the cases $p=2$ and $3$ have been studied 
under the additional condition:  
 $\Gal(K_n/\Q) \simeq GL_2(\Z/p^n\Z)$ for all $n\ge 1$. 
 In fact, for $p>3$, \Full\ implies this condition (\Cf \cite{SY}, Sect.~1).
}: 

\begin{itemize}
	\item[\Redl] $E$ has multiplicative reduction or potentailly good reduction at any prime $l\neq p$, 
	\item[\Redp] $E$ has multiplicative reduction at $p$, 
	\item[\Disc] $p \nmid \ord_p(\Delta)$, where $\Delta$ is the minimal discriminant of $E$, and 	
	\item[\Full] $\Gal(K_1/\Q) \simeq GL_2(\Z/p\Z)$.
\end{itemize}

\sn 
When $p>5$ the condition \Redp\ automatically implies \Disc\ 
(\Cf \cite{SY}, Sect.~1). 
The objective of this note is to propose a condition 

\begin{itemize}
	\item[\Tor] $E(\Qp)[p] = 0$ 
\end{itemize}

\noindent
instead of using \Redp\ and \Disc\ above, 
and give the same form of a lower bound of $\# \Cl_p(K_n)$ as in \cite{SY}. 
The main theorem is the following:

%%%%%% main theorem 
\begin{theorem}[\Cf Thm.~\ref{thm:main}]
\label{thm:main_intro}
Let $E$ be an elliptic curve over $\Q$ 
with minimal discriminant $\Delta$ %\cite{106}, VIII, Sect.\ 8
and let $p$ be a prime number $>3$. 
Put $K_n := \Q(E[p^n])$. 
Assume the conditions \Tor\ and \Full\ noted above. 
Then, for all $n\in \Z_{\ge 1}$, 
the exponent $\kappa_n$ of $\#\Cl_p(K_n) = p^{\kappa_n}$ satisfies the following inequality: 
\[
	\kappa_n \ge 2n(r - 1) - 2 \sum_{l\neq p,\, l\mid \Delta} \nu_l, 
\] 
where 
$r$ is the rank of $E(\Q)$ and 
\[
	\nu_l :=
	\begin{cases}
		\min\set{\ord_p(\ord_l(\Delta)),n}, & \mbox{if $E$ has split multiplicative reduction at $l$},\\
		0, & \mbox{otherwise}.
	\end{cases}
\]
Here, $\ord_p$ (resp.\ $\ord_l$) is the $p$-adic (resp.\ $l$-adic) valuation on $\Q$. 
\end{theorem}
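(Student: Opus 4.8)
The plan is to use Kummer theory on the Mordell--Weil group $E(\Q)$ to build a large everywhere-unramified abelian $p$-extension of $K_n$, and then to appeal to class field theory. Since $p\ge 5$ and $K_n\supseteq\Q(\mu_p)$ (by the Weil pairing), the field $K_n$ is totally imaginary, so its narrow and ordinary class groups coincide; hence any finite abelian extension of $K_n$ unramified at all finite places is unramified everywhere, and if it is a $p$-extension its Galois group is a quotient of $\Cl_p(K_n)$. It therefore suffices to construct an abelian extension $M/K_n$, unramified at all finite places, with $\#\Gal(M/K_n)\ge p^{2n(r-1)-2\sum_l\nu_l}$.

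First I would set up the Kummer classes. By \Full\ (together with $p>3$, as in the footnote) one has $\Gal(K_n/\Q)\cong GL_2(\Z/p^n\Z)$; this group contains a homothety $cI$ with $c\not\equiv 1\pmod p$, so $E(\Q)[p]=0$ and $H^1(\Gal(K_n/\Q),E[p^n])=0$. Consequently $E(\Q)/p^nE(\Q)\cong(\Z/p^n\Z)^r$, and since $E[p^n]\subseteq E(K_n)$ the composite $E(\Q)/p^nE(\Q)\xrightarrow{\delta}H^1(\Q,E[p^n])\to H^1(K_n,E[p^n])=\Hom(G_{K_n},E[p^n])$ is injective; write $\mathcal C\cong(\Z/p^n\Z)^r$ for its image. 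Let $M_n$ be the abelian extension of $K_n$, of exponent dividing $p^n$, cut out by $\mathcal C$ (i.e.\ generated by the $p^n$-division points of $E(\Q)$), so that the Kummer pairing embeds $\Gal(M_n/K_n)$ into $\Hom(\mathcal C,E[p^n])\cong(\Z/p^n\Z)^{2r}$. Using that a $\Z$-basis of the free part of $E(\Q)$ yields Kummer extensions of maximal degree, because the image of Galois on the $p$-adic Tate module of $E$ is as large as possible by \Full\ (the standard Kummer theory of elliptic curves with large image; compare \cite{SY}), I would conclude $\Gal(M_n/K_n)\cong\Hom(\mathcal C,E[p^n])$, hence $\#\Gal(M_n/K_n)=p^{2nr}$.

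The heart of the argument is then a place-by-place bound on the ramification of $M_n/K_n$. At a prime $v\nmid p\Delta$ it is unramified, since there $E$ has good reduction and the local image of $E(\Q)/p^n$ lies in the unramified part of $H^1$. At a prime $v\mid l$ with $l\mid\Delta$ and $l\neq p$, \Redl\ leaves two cases: for potentially good reduction the Kummer classes are again unramified at $v$ (the curve acquires good reduction over a tamely ramified extension of $\Q_l$, and a $p$-extension whose ramification dies over a tame extension is already unramified), while for multiplicative reduction the Tate parametrization shows that the image of $\mathcal C$ in the ramified direction of $E(\Q_l)/p^n$ is cyclic of order $p^{\nu_l}$ --- trivial unless the reduction is split, where it is governed by $\ord_p(\ord_l(\Delta))$; hence the inertia subgroup $\mathcal I_l\subseteq\Gal(M_n/K_n)$ attached to $l$ embeds into $\Hom(\Z/p^{\nu_l}\Z,E[p^n])$ and has order $\le p^{2\nu_l}$. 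Finally, at $v\mid p$ the hypothesis \Tor\ enters: $E(\Qp)[p]=0$ forces $E(\Qp)[p^\infty]=0$, so $E(\Qp)/p^nE(\Qp)$ is cyclic of order dividing $p^n$; since above $p$ every class of $\mathcal C$ is pulled back from $\Qp$, its restriction to the places over $p$ factors through this cyclic group, and therefore the inertia subgroup $\mathcal I_p\subseteq\Gal(M_n/K_n)$ at $p$ embeds into $\Hom(\Z/p^n\Z,E[p^n])$ and has order $\le p^{2n}$.

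To finish, let $M$ be the maximal subextension of $M_n/K_n$ unramified at every place; then $\Gal(M_n/M)$ is generated by $\mathcal I_p$ and the $\mathcal I_l$ with $l\mid\Delta$, $l\neq p$, so $\#\Gal(M_n/M)\le p^{2n}\prod_{l}p^{2\nu_l}$, whence $\#\Gal(M/K_n)\ge p^{2nr}/(p^{2n}\prod_l p^{2\nu_l})=p^{2n(r-1)-2\sum_l\nu_l}$. By the first paragraph $\#\Cl_p(K_n)\ge\#\Gal(M/K_n)$, giving $\kappa_n\ge 2n(r-1)-2\sum_l\nu_l$. I expect the main difficulty to lie in the local analysis at the bad primes $l\neq p$ --- extracting the exponent exactly $2\nu_l$ uniformly in the reduction type, and keeping track of the ramification of $K_n/\Q$ itself at $l$ --- and in the Kummer-theoretic input that $M_n/K_n$ really has the full degree $p^{2nr}$ (this is where \Full\ is used beyond the vanishing of $H^1$); by contrast the use of \Tor\ is clean, bounding the inertia above $p$ by $p^{2n}$, and it is exactly this that replaces $r$ by $r-1$ in the final bound.
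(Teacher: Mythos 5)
Your overall strategy coincides with the paper's: the field $M_n$ cut out by the Kummer classes of $E(\Q)/p^nE(\Q)$ is exactly the paper's $L_n=K_n([p^n]^{-1}P_1,\ldots,[p^n]^{-1}P_r)$; the identification $\Gal(M_n/K_n)\cong\Hom(\mathcal{C},E[p^n])$ of order $p^{2nr}$ is the bijectivity of the map $\Phi$ of \eqref{eq:Phi} (quoted from \cite{SY15}); the bound $\#\mathcal{I}_p\le p^{2n}$ deduced from the cyclicity of $E(\Qp)/[p^n]E(\Qp)$ under \Tor\ is Lemma~\ref{lem:local}; the bound $p^{2\nu_l}$ at multiplicative primes is Lemma~\ref{lem:local_l}\,(ii); and the closing class-field-theoretic count is the chain \eqref{eq:An}--\eqref{eq:I3}. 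So the architecture is the intended one.

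There is, however, one genuine gap in the local analysis at bad primes $l\neq p$: you write that ``\Redl\ leaves two cases'' and then treat only potentially good and multiplicative reduction. But \Redl\ is not a hypothesis of the theorem --- only \Tor\ and \Full\ (and $p>3$) are assumed --- so you must also handle additive, potentially \emph{multiplicative} reduction at $l\neq p$, for which your definition of $\nu_l$ still requires $\#\mathcal{I}_l=1$. The paper closes all additive cases uniformly (Lemma~\ref{lem:add} and Lemma~\ref{lem:local_l}\,(iii)): for additive reduction at $l\neq p$ with $p>3$ one has $\Ebarns(\F_l)\simeq\F_l$, $E_1(\Ql)\simeq\Zl$ and $c_l\le 4$, hence $E(\Ql)[p]=0$; since $E(\Ql)\simeq\Zl\oplus E(\Ql)_{\tor}$ with $l\neq p$, this forces $E(\Ql)/[p^n]E(\Ql)=0$, so every $P_i$ is locally a $p^n$-th multiple and the extension is locally trivial (not merely unramified) above $l$. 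This is the same mechanism you already exploit at $p$, so the repair is short and in fact subsumes your separate tame-descent argument for potentially good reduction. Everything else --- the use of total imaginarity to ignore the infinite places, the vanishing of $H^1(\Gal(K_n/\Q),E[p^n])$, and the Tate-parametrization computation of the ramified direction at split multiplicative primes --- matches the paper's reasoning or the result of \cite{SY} it cites.
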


\begin{remark}
\label{rem:cond}
\begin{enumerate}
	\item 
	The condition \Full\ means that 
	the Galois representation 
	$\rho:\Gal(\Qbar/\Q) \to \Aut(E[p]) \simeq GL_2(\Z/p\Z)$ 
	is full (i.e., surjective). 
	This can be checked by some criterions    
	\cite{S}, Sect.~2.8   
	(see also \cite{SY}, Sect.~1).   
	%	in \cite{E} for $p=3$ 
	
	\item 
	In \cite{DW}, 
	for an elliptic curve $E$ over $\Q$, 
	a prime number $p$ which does not 
	satisfy \Tor, that is, $E(\Qp)[p]\neq 0$ 
	is called a \textbf{local torsion prime} for $E$. 
	It is expected that 
	when $E$ does not have CM, 
	there are only finitely many local torsion primes $($\cite{DW}, Conj.~1.1$)$. 
	
%	\item 
%	In \cite{SY}, 
%	it is assumed that $E$ has multiplicative reduction or potentially good reduction 
%	at $l\neq p$. 
%	However,  
%	if we put $\nu_l = n$ for the remaining additive reduction case, 
%	it is easy to show the inequality in Theorem \ref{thm:main_intro}. 
%	Although this fact is proved in \cite{SY} and \cite{SY15} essentially, 
%	we give a proof 
%	for the sake of reader's  convenience (see Lem.~\ref{lem:ineq} and Lem.~\ref{lem:local_l}). 
%	
%	\item 
%	In \cite{SY},  
%	it is also assumed that $\Gal(K_n/\Q) \simeq GL_2(\Z/p^n\Z)$ for all $n\ge 1$ 
%	to treat the case $p=2$. 
%%	It is known that this condition is equivalent to the condition \Full\ for $p>3$ (see \cite{SY}, Sect.~1). 
%	If we restrict our attention to odd primes $p$, 
%	the proofs in \cite{SY} work. See also the footnote in the proof of the main theorem. 
	\end{enumerate}
\end{remark}

%\subsection*{Contents}
%The contents of this note is the following:
%\begin{itemize}
%	\item Section \ref{sec:tor}:
%	\begin{itemize}
%	\item We recall the structure of the group of rational points $E(\Ql)$ 
%	for any prime number $l$ following \cite{106} and \cite{151}.
%	\item 
%	We give some sufficient conditions 
%	for \Tor. 
%	In fact, the conditions \Redp\ and \Disc\ which used in \cite{SY} imply the condition \Tor\ (Lem.~\ref{lem:Delta}). 
%	\end{itemize}
%	
%	\item Section \ref{sec:global}: 
%	\begin{itemize}
%	\item A proof of Theorem~\ref{thm:main_intro}. 
%	\end{itemize}
%\end{itemize}
A proof of Theorem~\ref{thm:main_intro} is given in Section~\ref{sec:global} (\Cf Thm.~\ref{thm:main}). 
In Section~\ref{sec:tor},  
we give some sufficient conditions 
for \Tor. 
In fact, the conditions \Redp\ and \Disc\ imply the condition \Tor\ (Lem.~\ref{lem:Delta}). 
Even though the theorem above can be applied to 
an elliptic curve and a prime $p$ of a wider class 
than \cite{SY}, 
the proof is significantly simplified.

%
%For an elliptic curve over $\Q$ which may not have complex multiplication, 
%in recent papers \cite{SY15} and \cite{SY},  
%Sairaiji and Yamauchi propose a new analysis of a lower bound of the class numbers 
%associated to $K_n = \Q(E[p^n])$. 
%Although they studied in the case where $E$ has multiplicative reduction,  
%but the result above (Thm.\ \ref{thm:main_intro}) is given by mimicking their arguments. 
%

Closing this section, 
let us consider the elliptic curve $E$ over $\Q$ defined by 
\[
	y^2+y=x^3+x^2-2x
\] 
(the Cremona label 389a1) 
which has the smallest conductor among those of $r=2$. 
This $E$ does not have CM and 
$\Delta = 389$ 
($E$ has multiplicative reduction at $389$). 
By using SAGE \cite{sage}, 
one can confirm that 
the condition \Full\ holds for all primes $p$ 
and \Tor\ holds for any odd prime $<10^6$. 
Thus, our main theorem says that, 
for all odd primes $p <10^6$ 
%at which $E$ has good ordinary reduction 
(which may be $p = 389$),  
we have 
\[
	\# \Cl_p(K_n) \ge p^{2n(r-1)}. 
\]

\subsection*{Acknowledgement}
The author would like to thank 
Professor F.~Sairaiji and Professor T.~Yamauchi 
who taught the author their results in \cite{SY15} and \cite{SY}. 
Not only they generously sent the author their preprint \cite{SY}, but also 
gave suggestions and comments which are improved the main theorem in this note. 
The author would like to thank Professor K.~Matsuno for pointing out an error of 
the proof of Lemma \ref{lem:Delta} in an early draft of this note. 
The arguments in the latter part of Lemma~\ref{lem:local} are due to them. 
The author would like to thank the referee for some comments which amend 
this note.

%%%%%%%%%%%%%%%%%%%%%%%%%%%%
\section{Local torsion primes}
\label{sec:tor}

Throughout this note, 
we use the following notation: 

\begin{itemize} 
\item $p$ : a prime number $>2$, 
\item $E$ : an elliptic curve over $\Q$,
\item $\Delta$ : the minimal discriminant of $E$ (\cite{106}, Chap.\ VIII, Sect.\ 8), 
\item $[p^n]:E\to E$ : the isogeny multiplication by $p^n$ (\cite{106}, Chap.~III, Sect.~4), and
%\item $N$ : the conductor of $E$ (\cite{151}, Chap.\ IV, Sect.\ 10),
\item $E[p^n] := E(\Qbar)[p^n]$ : the $p^n$-torsion subgroup of $E(\Qbar)$.
\end{itemize}

%\noindent
%The aim of this section is to investigate the condition 
%
%\begin{itemize}
%	\item[\Tor] $E(\Qp)[p] = 0$ 
%\end{itemize}
%as noted in Introduction. 

\subsection*{Structure theorem on $E(\Ql)$}
For a second prime number $l$ (which may be $p$), 
we denote also by $E$  
the base change $E\otimes_{\Q} \Ql$ of the elliptic curve $E$ to $\Ql$. 
Define 
\begin{itemize}
	\item  $\pi:E(\Ql) \to \Ebar(\Fl)$\,: the reduction map 
	modulo $l$ (\cite{106}, Chap.~VII, Sect.~2), 
	\item  $\Ebarns(\Fl)$: the set of non-singular points in 
the reduction $\Ebar(\Fl)$
(\Cf \cite{106},  Chap.~III, Prop.~2.5), and 
	\item $E_0(\Ql) := \pi^{-1}(\Ebar_{\ns}(\Fl))$. 
\end{itemize}
The reduction map $\pi:E(\Ql)\to \Ebar(\Fl)$ modulo $l$ 
	induces a short exact sequence (of abelian groups)  
	\begin{equation}
		\label{eq:red_l}
		0\to E_1(\Ql) \to E_0(\Ql) \onto{\pi} \Ebar_{\ns}(\Fl) \to 0, 
	\end{equation}
	where $E_1(\Ql)$ is defined by the exactness 
	(\Cf \cite{106}, Chap.~VII, Prop.~2.1). 
%The following lemma concerning the structure of $E(\Ql)$ 
%is a recollection from \cite{106} and \cite{151}.

\begin{lemma}
\label{lem:str}
\begin{enumerate}
	\item $E_1(\Ql) \simeq \Zl$. 
	In particular, $E_1(\Ql)[p] = 0$. 
	\item 
	\begin{itemize}
	\item [$(\mathrm{a})$] 
	If $E$ has multiplicative reduction at $l$, 
	then $\Ebarns (\Fl) \subset \Ebarns(\F_{l^2}) \simeq (\F_{l^2})^{\times}$. 
	\item [$(\mathrm{b})$] 
	If $E$ has additive reduction at $l$,  
	then $\Ebarns(\Fl)  \simeq \Fl$ as additive groups. 
	\end{itemize}
	
	\item 
	\begin{itemize}
	\item[$(\mathrm{a})$] 	
	If $E$ has split multiplicative reduction at $l$, 
	then $E(\Ql)/E_0(\Ql) \simeq \Z/\ord_l(\Delta)\Z$.
	\item [$(\mathrm{b})$] 
	If $E$ has non-split multiplicative reduction at $l$, 
	then $E(\Ql)/E_0(\Ql)$ is a finite group of order at most $2$. 
	\item [$(\mathrm{c})$]
	In all other cases, namely, 
	$E$ has good reduction or additive reduction at $l$, 
	the quotient $E(\Ql)/E_0(\Ql)$ is 
	a finite group of order at most $4$. 
	\end{itemize}
	\item 
	We have an isomorphism 
%By the structure theory for compact $p$-adic Lie groups
%(\cite{M}, Thm.~7, see also \cite{CL}, Thm.~21), 
%	By Mattuck's theorem (\cite{M}, Thm.\ 7, see also \cite{106}, Chap.~VII, Prop.~6.3),   
\[
	E(\Ql) \simeq \Zl \oplus E(\Ql)_{\tor}
\]
as abelian groups, 
where $E(\Ql)_{\tor}$ is the torsion subgroup of $E(\Ql)$ 
which is finite.
\end{enumerate}	
\end{lemma}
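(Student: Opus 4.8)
The plan is to obtain all four assertions from the standard local theory of elliptic curves, the organizing device being the chain of subgroups $E_1(\Ql)\subset E_0(\Ql)\subset E(\Ql)$ together with the exact sequence~\eqref{eq:red_l}. For (1), I would identify $E_1(\Ql)$, as a topological group, with $\Ehat(l\Zl)$, the formal group of $E$ evaluated on the maximal ideal of $\Zl$ (\cite{106}, Chap.~VII, Prop.~2.2). This group is filtered by the subgroups $\Ehat(l^{m}\Zl)$ with successive quotients isomorphic to $\Fl$, so it is pro-$l$; and when $l\ge 3$ the estimate $\ord_l(l)=1<l-1$ lets the formal logarithm identify $\Ehat(l\Zl)$ with $l\Zl\simeq\Zl$ (\cite{106}, Chap.~IV), the case $l=2$ being argued similarly. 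The consequence $E_1(\Ql)[p]=0$ is then immediate: for $l\ne p$ it holds because $E_1(\Ql)$ is pro-$l$, and for $l=p$ because $p>2$ forces $l\ge 3$ and $\Zl$ is torsion-free.

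For (2), I would use that a singular Weierstrass curve over $\Fl$ has a unique singular point---a node in the multiplicative case, a cusp in the additive case---and that its smooth locus $\Ebarns$ is, in either case, a one-dimensional smooth connected commutative algebraic group over $\Fl$ (\cite{106}, Chap.~III, Prop.~2.5), hence a form of $\mathbb{G}_m$ or of $\mathbb{G}_a$. In the additive case the cusp and its tangent line are rational, so $\Ebarns\simeq\mathbb{G}_a$ already over $\Fl$ and $\Ebarns(\Fl)\simeq\Fl$. At a node the two tangent directions are either both rational or conjugate over $\F_{l^2}$, so in both cases $\Ebarns$ becomes $\mathbb{G}_m$ over $\F_{l^2}$, giving $\Ebarns(\F_{l^2})\simeq(\F_{l^2})^{\times}$ and the inclusion $\Ebarns(\Fl)\subset\Ebarns(\F_{l^2})$.

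For (3), good reduction gives $E_0(\Ql)=E(\Ql)$ and there is nothing to show. Otherwise I would pass to the N\'eron model and recall the structure of the geometric component group $\Phi$: it is cyclic of order $\ord_l(\Delta)$ for multiplicative reduction (via the Tate curve uniformization) and of order at most $4$ for additive reduction (from the list of Kodaira types, via Tate's algorithm). Since $E(\Ql)/E_0(\Ql)$ is the part of $\Phi$ fixed by Frobenius, one gets $E(\Ql)/E_0(\Ql)\simeq\Z/\ord_l(\Delta)\Z$ for split multiplicative reduction (Frobenius acting trivially on $\Phi$), order $\gcd(2,\ord_l(\Delta))\le 2$ for non-split multiplicative reduction (Frobenius acting as $-1$ on $\Z/\ord_l(\Delta)\Z$), and order at most $4$ in the remaining cases. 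For (4) I would then assemble the previous parts: since $E(\Ql)/E_0(\Ql)$ and $E_0(\Ql)/E_1(\Ql)\simeq\Ebarns(\Fl)$ are finite, $E_1(\Ql)\simeq\Zl$ is an open subgroup of finite index in the profinite abelian group $E(\Ql)$; writing $E(\Ql)$ as the product of its pro-$l$ part and its (finite) prime-to-$l$ part, the pro-$l$ part contains a copy of $\Zl$ of finite index, hence is a finitely generated $\Zl$-module, of rank $1$ after tensoring with $\Ql$, hence isomorphic to $\Zl\oplus(\text{finite})$; combining with the prime-to-$l$ part yields $E(\Ql)\simeq\Zl\oplus E(\Ql)_{\tor}$ with $E(\Ql)_{\tor}$ finite.

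I expect the main obstacle to be part (3), and within it the identification $E(\Ql)/E_0(\Ql)\simeq\Z/\ord_l(\Delta)\Z$ in the split multiplicative case: this is the one step that cannot be reduced to bare manipulation of the reduction filtration, it relies on the Tate curve uniformization (equivalently on Tate's algorithm), and it is precisely what produces the correction terms $\nu_l$ in the main theorem. The other ingredients---the formal logarithm, the classification of reduced Weierstrass cubics, and the structure of a profinite abelian group containing $\Zl$ as an open subgroup of finite index---are routine.
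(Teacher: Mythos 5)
Your proof follows essentially the same route as the paper's: the same filtration $E_1(\Ql)\subset E_0(\Ql)\subset E(\Ql)$ with \eqref{eq:red_l}, the formal group and formal logarithm for (1), the classification of singular Weierstrass cubics for (2), the N\'eron/Kodaira component-group facts for (3), and the finite-index assembly for (4); the paper simply cites Silverman (Chap.~III Ex.~3.5, Chap.~VII Thm.~6.1, and Advanced Topics Chap.~IV Rem.~9.6) where you unpack the underlying arguments, and in (4) it asserts that \eqref{eq:red_l} splits where you give the (more careful) open-subgroup-of-finite-index argument. The one caveat, which the paper's own proof shares, is that the formal-logarithm step genuinely requires $l\ge 3$, and for $l=2$ the case is not ``argued similarly'': the isomorphism $E_1(\Q_2)\simeq\Z_2$ can fail (a Tate curve over $\Q_2$ has $E_1(\Q_2)\simeq 1+2\Z_2\simeq\Z_2\oplus\Z/2\Z$), although the consequences actually used later, namely $E_1(\Ql)[p]=0$ for $p>2$ and part (4), are unaffected.
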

\begin{proof}
	(i) We have 
	$E_1(\Ql) \simeq \Ehat(l\Zl)$, 
	where $\Ehat(l\Zl)$ is the 
	group associated to the formal group $\Ehat$ of $E$ 
	(\cite{106}, Chap.~VII, Prop.~2.2). 
	From \cite{106}, Chapter~IV, Theorem 6.4 (b), 
	the formal logarithm induces $\Ehat(l\Zl) \simeq l\Zl \simeq \Zl$. 
	As $\Zl$ is $p$-torsion free for $p>2$, 
	we obtain $E_1(\Ql)[p]=0$. 
	
\sn
	(ii) The assertion follows from \cite{106}, Chapter~III, Exercise~3.5. 
%
%	(b) 
%	If $E$ has additive reduction at $l$, 
%	$\Ebarns(\Fl) \simeq \Z/l\Z$   
%	(\cite{106}, Chap.~III, Exercise~3.5) 
%	and hence $\Ebarns(\Fl)[p] =0$. 

\sn
	(iii) 
	 The assertion follows from 
	\cite{106}, Chapter~VII, Theorem~6.1 for the case (a) and (c), 
 	and   
 	\cite{151}, Chapter~IV, Remark~9.6 for the case (b). 
 	
\sn
	(iv)
	As the exact sequence \eqref{eq:red_l} splits, 
	we have $E_0(\Ql) \simeq E_1(\Ql) \oplus \Ebarns(\Fl)$. 
	From (i), $E_1(\Ql) \simeq \Zl$ and the quotients 
	$E(\Ql)/E_0(\Ql), E_0(\Ql)/E_1(\Ql) \simeq \Ebarns(\Fl)$ 
	are finite by (ii) and (iii). 
	The assertion follows from this.   
\end{proof}

Recall that 
the \textbf{Tamagawa number} $c_l$ at a prime $l$ 
for $E$ is defined by
\begin{equation}
\label{def:Tam}
	c_l := (E(\Ql):E_0(\Ql)). 
\end{equation}
%It is known that $c_l \le 4$ except the case where 
%$E$ has split multiplicative reduction (\cite{106}, Chap.~VII, Thm.~6.1). 

\begin{lemma}
\label{lem:add}
	Suppose that $E$ has additive reduction at a prime $l\neq p$. 
		We further assume the following conditions: 
		\begin{itemize}
		\item[$(\mathrm{a})$] $p>3$,  or
		\item[$(\mathrm{b})$] $c_l \neq 3$, where $c_l$ is the Tamagawa number at $l$ $($\Cf \eqref{def:Tam}$)$. 
		\end{itemize}
	Then, $E(\Ql)[p] = 0$.
\end{lemma}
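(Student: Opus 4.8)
The plan is to use the structure theorem (Lemma~\ref{lem:str}) to control the $p$-torsion of $E(\Ql)$ via the filtration $E_1(\Ql) \subset E_0(\Ql) \subset E(\Ql)$. Since $E$ has additive reduction at $l$, part (ii)(b) gives $\Ebarns(\Fl) \simeq \Fl$ as an additive group, which has order $l$ coprime to $p$ (as $l \neq p$), so $\Ebarns(\Fl)[p] = 0$. Combined with $E_1(\Ql)[p] = 0$ from part (i), the exact sequence~\eqref{eq:red_l} shows $E_0(\Ql)[p] = 0$. Hence any $p$-torsion point of $E(\Ql)$ injects into the finite quotient $E(\Ql)/E_0(\Ql)$, whose order is at most $4$ by part (iii)(c).

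The remaining task is to show this quotient has no $p$-torsion. Under hypothesis~(a), $p > 3$, so $p \nmid 4$ and the claim is immediate: a group of order dividing $4$ has trivial $p$-torsion. Under hypothesis~(b), $c_l \neq 3$; but in the additive case Kodaira's classification forces $c_l \in \{1,2,3,4\}$, so $c_l \neq 3$ together with $c_l \le 4$ leaves $c_l \in \{1,2,4\}$, a power of $2$. Since $p > 2$, again $p \nmid c_l = \#(E(\Ql)/E_0(\Ql))$, so the quotient has no $p$-torsion. In either case $E(\Ql)[p] = 0$.

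I expect the only mildly delicate point to be the bookkeeping on $c_l$ in case~(b): one must recall that for additive reduction the Tamagawa number is one of $1,2,3,4$ (the Kodaira types $\mathrm{II},\mathrm{III},\dots,\mathrm{IV}^*$ contributing these values), which is already implicit in Lemma~\ref{lem:str}(iii)(c), so that excluding $3$ leaves only powers of $2$. Everything else is a direct chase through the exact sequence~\eqref{eq:red_l} and the fact that $p$ is coprime to both $l$ and to the size of the finitely many remaining quotients.
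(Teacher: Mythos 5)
Your proposal is correct and follows essentially the same route as the paper: establish $E_0(\Q_l)[p]=0$ from Lemma~\ref{lem:str}~(i) and (ii-b) via the exact sequence \eqref{eq:red_l}, then rule out $p$-torsion in the quotient $E(\Q_l)/E_0(\Q_l)$ of order $c_l\le 4$ using hypothesis (a) or (b). The only cosmetic difference is that you invoke the Kodaira classification to pin down $c_l\in\{1,2,3,4\}$, whereas the paper simply uses the bound $c_l\le 4$ from Lemma~\ref{lem:str}~(iii); both yield the same conclusion.
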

\begin{proof}
	As $E$ has additive reduction at $l$, 
	we have  $\Ebarns(\Fl)[p] =0$ (Lem.~\ref{lem:str} (ii-b)). 
	On the other hand, $E_1(\Ql)[p] = 0$ (Lem.~\ref{lem:str} (i))
	so that 
	$E_0(\Ql)[p] = 0$ by \eqref{eq:red_l}. 
	As $c_l = \#E(\Ql)/E_0(\Ql)\le 4$ (Lem.~\ref{lem:str} (iii)), 
    the quotient 
    $E(\Ql)/E_0(\Ql)$ does not possess elements of order $p$ 
    under the additional assumption (a) or (b).
    We obtain $E(\Ql)[p]  = 0$.       
\end{proof}

\subsection*{Multiplicative reduction at $p$}
\begin{lemma}
\label{lem:Delta}
Suppose the condition \Redp\ in Introduction, 
that is, $E$ has multiplicative reduction at $p$. 
We further assume one of the following conditions: 

\begin{itemize}
	\item[\Disc] $p \nmid \ord_p(\Delta)$, or
	\item[$(\mathrm{a})$] $E$ has non-split multiplicative reduction at $p$.
	%,  or
	%\item[$(\mathrm{b})$]$p>11$. 
\end{itemize}
	Then, the condition 
	\Tor $:E(\Qp)[p] = 0$ 	holds. 
\end{lemma}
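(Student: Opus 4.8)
The plan is to read off everything from the structure results of Lemma~\ref{lem:str} applied with $l = p$, together with two short diagram chases. The first step is to check that $E_0(\Qp)[p] = 0$. Since $E$ has multiplicative reduction at $p$, Lemma~\ref{lem:str}~(ii-a) gives $\Ebarns(\Fp) \subset \Ebarns(\F_{p^2}) \simeq (\F_{p^2})^{\times}$, whose order $p^2 - 1$ is prime to $p$; hence $\Ebarns(\Fp)[p] = 0$. Combined with $E_1(\Qp)[p] = 0$ from Lemma~\ref{lem:str}~(i), the exact sequence \eqref{eq:red_l} with $l = p$ shows that a $p$-torsion point of $E_0(\Qp)$ maps to $0$ in $\Ebarns(\Fp)$ and therefore lies in $E_1(\Qp)[p] = 0$; so $E_0(\Qp)[p] = 0$.

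The second step is to show that $E(\Qp)/E_0(\Qp)$ has no element of order $p$, which is where the extra hypothesis is used. If $E$ has non-split multiplicative reduction at $p$ --- in particular if (a) holds --- then $\#\bigl(E(\Qp)/E_0(\Qp)\bigr) \le 2$ by Lemma~\ref{lem:str}~(iii-b), and since $p > 2$ this quotient is $p$-torsion free. If instead \Disc\ holds and the reduction is split multiplicative, then $E(\Qp)/E_0(\Qp) \simeq \Z/\ord_p(\Delta)\Z$ by Lemma~\ref{lem:str}~(iii-a), and $p \nmid \ord_p(\Delta)$ forces this cyclic group to contain no element of order $p$. Every case allowed by the hypotheses is thereby covered, so $\bigl(E(\Qp)/E_0(\Qp)\bigr)[p] = 0$.

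Finally I would assemble the two facts using the exact sequence $0 \to E_0(\Qp) \to E(\Qp) \to E(\Qp)/E_0(\Qp) \to 0$: any $x \in E(\Qp)[p]$ has image killed by $p$ in $E(\Qp)/E_0(\Qp)$, hence zero by the second step, so $x \in E_0(\Qp)[p] = 0$ by the first step, giving $E(\Qp)[p] = 0$, which is \Tor. I do not expect a genuine obstacle here; the only point that must not be glossed over --- presumably the source of the error in the earlier draft mentioned in the acknowledgement --- is the split multiplicative case, where the component group $\Z/\ord_p(\Delta)\Z$ really does feed $p$-torsion into $E(\Qp)$ as soon as $p \mid \ord_p(\Delta)$, so that at least one of \Disc\ or (a) is indispensable.
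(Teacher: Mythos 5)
Your proof is correct and follows essentially the same route as the paper's: show $E_0(\Qp)[p]=0$ via Lemma~\ref{lem:str}~(i) and (ii-a) together with the exact sequence \eqref{eq:red_l}, then rule out $p$-torsion in the component group $E(\Qp)/E_0(\Qp)$ using Lemma~\ref{lem:str}~(iii) in each of the two cases. The only difference is that you spell out the final diagram chase a bit more explicitly than the paper does; the substance is identical.
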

\begin{proof}
%	Consider the base change of $E$ to $\Qp$ and regard $E$ as an elliptic curve over $\Qp$. 
%	As in the proof of Lemma~\ref{lem:local_l} (iii)   
%	the reduction map $\pi:E(\Qp)\to \Ebar(\Fp)$ induces a short exact sequence (of abelian groups)  
%	\begin{equation}
%		\label{eq:red}
%		0\to E_1(\Qp) \to E_0(\Qp) \onto{\pi} \Ebar_{\ns}(\Fp) \to 0. 
%	\end{equation}
%%	where $E_0(\Qp) := \pi^{-1}(\Ebar_{\ns}(\Fp))$ 
%%	and $E_1(\Qp)$ is defined by the exactness 
%%	(\cite{106}, Chap.~VII, Prop.~2.1). 
	As $E$ has multiplicative reduction at $p$, 
%	it is known that
	$\Ebarns(\F_p) \subset \Ebarns(\F_{p^2}) \simeq (\F_{p^2})^{\times}$  
	(Lem.~\ref{lem:str} (ii)). 
	In particular,  $\Ebarns(\F_p)[p] =0$. 
	On the other hand, $E_1(\Qp)[p] = 0$ (Lem.~\ref{lem:str} (i)) 
	% (\cite{106}, Chap.~IV, Example~6.1.1) 
	and hence $E_0(\Qp)[p] = 0$ by \eqref{eq:red_l}. 
	
	\sn
    \textbf{Case (a):} 
    First, we suppose that $E$ has non-split multiplicative reduction. 
    In this case, the quotient group 
    $E(\Qp)/E_0(\Qp)$ is a finite group of order at most $2$ (Lem.~\ref{lem:str} (iii)) 
    so that we obtain $E(\Qp)[p]  = 0$. 
    
    \sn 
    \textbf{Case\,(\text{Disc}):} 
    Next, we assume $p\nmid \ord_p(\Delta)$. 
    From Case (a) above, we may assume that 
    $E$ has split multiplicative reduction at $p$. 
    The assertion follows from 
    $E(\Qp)/E_0(\Qp) \simeq \Z/\ord_p(\Delta)\Z$ 
    (Lem.~\ref{lem:str} (iii)). 
%    
%    \sn
%    \textbf{Case (b):} 
%     Since the discriminant $\Delta$ is minimal, 
%     we have $\ord_p(\Delta) < 12$ (\cite{106}, Chap.~VII, Rem.~1.1 and Chap.~VIII, Sect.~8). 
%     When $p>11$, we have $p\nmid \ord_p(\Delta)$ 
%     and the assertion follows from Case \Disc\ above.    
\end{proof}

\begin{remark}
When the elliptic curve $E$ over $\Q$ 
has multiplicative reduction at $2$, 
by considering the isomorphism $E(\K) \simeq {\K}^{\times}/q^{\Z}$ 
for some unramified extension $\K/\Q_2$ locally,  
%as in the proof of Lem.~\ref{lem:Delta}, 
$-1\in {\K}^{\times}$ gives a $2$-torsion element in $E(\Q_2)$. 
Thus the condition \Tor\ at $2$ does not hold: $E(\Q_2)[2] \neq 0$.	
\end{remark}

\subsection*{Good reduction at $p$}

\begin{lemma}
\label{lem:DW}
Suppose that $E$ has good reduction at $p$. 

\begin{enumerate}
	\item We further assume one of the following conditions:  
	\begin{enumerate}[label=$(\mathrm{\alph*})$]
		\item $\Ebar(\Fp)[p] = 0$, or
		\item $E(\Q)_{\tor} \neq 0, p\ge 11$. 
	\end{enumerate}	
	Then, the condition \Tor\  holds. 

\item 
Assume that $E$ has CM, and $p\ge 7$. 
Then, \Tor\ holds 
	if and only if 
$\Ebar(\Fp)[p] = 0$.
\end{enumerate}
\end{lemma}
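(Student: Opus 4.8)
The plan is to reduce everything to the structure of $E(\Qp)$ under good reduction, where the exact sequence \eqref{eq:red_l} becomes $0 \to E_1(\Qp) \to E(\Qp) \to \Ebar(\Fp) \to 0$ (since $E_0(\Qp) = E(\Qp)$ when the reduction is good). By Lemma~\ref{lem:str}(i) we have $E_1(\Qp)[p] = 0$, so a nonzero element of $E(\Qp)[p]$ would have to map injectively into $\Ebar(\Fp)[p]$. This immediately handles part~(i)(a): if $\Ebar(\Fp)[p] = 0$ then $E(\Qp)[p] = 0$, which is exactly \Tor.

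For part~(i)(b), the idea is to show that the hypotheses force us back into case~(a). Suppose $\Ebar(\Fp)[p] \neq 0$; then $\#\Ebar(\Fp) \geq p$. By the Hasse bound $\#\Ebar(\Fp) \leq p + 1 + 2\sqrt{p}$, and since $p \mid \#\Ebar(\Fp)$ the only possibility in this range is $\#\Ebar(\Fp) = p$ once $p \geq 5$ (the next multiple of $p$, namely $2p$, exceeds $p+1+2\sqrt p$ for $p \geq 5$). So $\Ebar$ would be an elliptic curve over $\Fp$ with exactly $p$ points, hence trace of Frobenius $a_p = 1$ — a supersingular-adjacent case; in particular $\Ebar(\Fp) \simeq \Z/p\Z$ has no prime-to-$p$ torsion. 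Now bring in $E(\Q)_{\tor} \neq 0$: since $E$ has good reduction at $p$, the reduction map $E(\Q)_{\tor} \inj \Ebar(\Fp)$ is injective for $p > 2$ (standard; \cite{106}, Chap.~VII). A nontrivial torsion point of $E(\Q)$ injects into $\Ebar(\Fp) \simeq \Z/p\Z$, so it must itself have order a power of $p$, i.e.\ order exactly $p$. But $E(\Q)_{\tor}$ has order at most $16$ by Mazur's theorem (and the allowed orders are $1$--$10$ and $12$), so it cannot contain an element of order $p \geq 11$ — contradiction. Hence $\Ebar(\Fp)[p] = 0$ and we are back in case~(a).

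For part~(ii), when $E$ has CM and $p \geq 7$, one direction is just part~(i)(a). For the converse, assume \Tor, i.e.\ $E(\Qp)[p] = 0$, and suppose for contradiction that $\Ebar(\Fp)[p] \neq 0$. As above this forces $a_p = 1$ and $\#\Ebar(\Fp) = p$; in the CM setting $a_p = 1 \neq 0$ means $p$ is an ordinary prime, so $p$ splits in the CM field $F$ as $p = \pi\ol\pi$. The point is that ordinary CM reduction gives a canonical splitting of the $p$-divisible group into étale and connected parts over $\Zp$, and the étale part contributes a copy of $\mu$-type-free torsion: concretely, $E[\pi] $ or $E[\ol\pi]$ is unramified at $p$, so $E(\Qp)$ (or a small unramified extension's worth, but in fact already over $\Qp$ by the Serre–Tate / Lubin–Tate description) acquires a point of order $p$ lifting the point of order $p$ on $\Ebar(\Fp)$. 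This produces a nonzero element of $E(\Qp)[p]$, contradicting \Tor. Therefore $\Ebar(\Fp)[p] = 0$.

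The main obstacle is the converse in part~(ii): making precise and rigorous the claim that ordinary CM reduction with $\#\Ebar(\Fp) = p$ actually produces a $\Qp$-\emph{rational} point of order $p$ (rather than merely one over the unramified quadratic extension, or over the completion of $F$ at a prime above $p$). This requires carefully invoking the theory of the formal group and its étale quotient over $\Zp$ — the connected–étale sequence of $E[p^\infty]$ over $\Zp$ — together with the fact that the Frobenius eigenvalue $\pi$ can be taken in $\Zp$ since $p$ splits in $F$; equivalently, one uses the Deuring lifting / canonical lift picture to see that the étale quotient is the constant group scheme $\Z/p\Z$ over $\Zp$. The hypothesis $p \geq 7$ is presumably exactly what rules out the finitely many small-$p$ exceptions where the connected–étale splitting could fail to be defined over $\Qp$ (ramification in the field of definition of the CM, or $p$ dividing the conductor of the order), so the write-up should isolate where $p \geq 7$ enters.
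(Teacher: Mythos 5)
Your proposal is correct in substance, and part (i)(a) coincides with the paper's argument (the kernel-of-reduction sequence together with $E_1(\Qp)[p]=0$ from Lemma~\ref{lem:str}(i)). Where you diverge is that the paper disposes of (i)(b) and of the converse in (ii) by citing \cite{DW}, Proposition~2.1, parts (1) and (3) respectively, while you reconstruct those inputs. For (i)(b) your reconstruction is complete and arguably preferable: Hasse forces $\#\Ebar(\Fp)=p$, the torsion of $E(\Q)$ injects into $\Ebar(\Fp)\simeq\Z/p\Z$ under good reduction at an odd prime, so $E(\Q)_{\tor}$ would have to contain a point of order $p\ge 11$, contradicting Mazur --- this is exactly the conclusion the paper extracts from the citation, proved directly. (One small slip: $2p>p+1+2\sqrt p$ holds for $p\ge 7$, not $p\ge 5$; for $p=5$ one can have $\#\Ebar(\F_5)=10$. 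This is harmless here since both relevant parts assume $p\ge 7$ or $p\ge 11$.) For the converse in (ii), the paper simply quotes \cite{DW}, Proposition~2.1\,(3), whereas you sketch its proof via the connected--\'etale decomposition of $E[p^\infty]$ in the ordinary CM case. Your outline is indeed the content of that result, but as you yourself flag, it is not carried out: the missing step is to justify that the splitting $E[p]=E[\pi]\oplus E[\ol{\pi}]$ with \'etale constant quotient $\Z/p\Z$ is realized over $\Qp$, which follows because ordinariness forces $p$ to split in the CM field $F$, so $F$ embeds into $\Qp$ and the CM endomorphisms act $\Qp$-rationally, while the unit root of Frobenius is $\equiv a_p=1\pmod p$. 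Note also that your guess about where $p\ge 7$ enters is off: it is used only to force $\#\Ebar(\Fp)=p$ (hence $a_p=1$) via Hasse, not to control the field of definition of the splitting.
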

The lemma above essentially follows from \cite{DW}, Proposition~2.1. 
For the sake of completeness, we give a proof. 
\begin{proof}[Proof of Lem.~\ref{lem:DW}]
(i) 
\textbf{Case (a):}\ 
%For $E$ has good reduction at $p$,  
%the reduction map $\pi:E(\Qp) \to \Ebar(\Fp)$ 
%As \eqref{eq:red} in the proof of Lemma~\ref{lem:Delta},   
%consider the short exact sequence 
%$0\to E_1(\Qp)\to E(\Qp) \onto{\pi} \Ebar(\Fp) \to 0$, 
%where $E_1(\Qp) = \Ker(\pi)$ and $E_0(\Qp) = E(\Qp)$. 
We have $E_1(\Qp)[p]  =  0$ 
(Lem.~\ref{lem:str} (i)). 
The condition can be checked by using the exact sequence 
\[
  0 \to E(\Qp)[p] \onto{\pi} \Ebar(\Fp)[p] \onto{\delta} \wh{E}(p\Zp)/p\wh{E}(p\Zp),  
\]
where %$\pi$ is the reduction map and 
$\delta$ is the connecting homomorphism. 
The assumption
%\footnote{
%This kind of elliptic curves are said to be \textbf{anomalous}.
%} 
$\Ebar(\Fp)[p] = 0$  
implies the condition \Tor. 

\noindent
\textbf{Case\ (b):}\ 
Assume $E(\Qp)[p] \neq 0$. 
By \cite{DW}, Proposition~2.1 (1), 
we have $E(\Q)_{\tor} \simeq \Z/p\Z$.  
From the assumption $p\ge 11$, 
this contradicts with Mazur's theorem on 
$E(\Q)_{\tor}$ (\cite{106}, Chap.~VIII, Thm.~7.5).

\noindent
(ii) 
From (i) (the case (a)), 
it is enough to show that 
if $\Ebar(\Fp)[p] \neq 0$, then $E(\Qp)[p] \neq 0$. 
From Hasse's theorem (\cite{106}, Chap.~V, Thm.~1.1) and $p\ge 7$, 
$\# \Ebar(\Fp) = p$. 
We have $a_p(E) := p+1 - \#\Ebar(\Fp) = 1$. 
This implies  
$E(\Qp)[p] \neq 0$ by \cite{DW}, Proposition~2.1 (3) 
under the assumption that $E$ has CM.
\end{proof}

When $E$ has CM, 
Lemma~\ref{lem:DW} (ii) gives a criterion 
for the condition \Tor. 
On the other hand, 
Lemma~\ref{lem:DW} (i) says that, 
for $p\ge 11$, 
\Tor\ does not hold only if 
\begin{enumerate}[label=$(\mathrm{\alph*}')$]
	\item $\Ebar(\Fp)[p] \neq 0$, and 
	\item $E(\Q)_{\tor} = 0$. 
\end{enumerate}
For our purpose, we further impose 

\begin{enumerate}[label=$(\mathrm{\alph*}')$]
  \setcounter{enumi}{2}
	\item $E$ does not have CM, and 
	\item the rank $r>1$. 
\end{enumerate}

\noindent
The following calculations are given by using SAGE \cite{sage}. 
There are $1733$ elliptic curves with conductor $N<10^4$ satisfying ($\mathrm{b'}$)-($\mathrm{d'}$) above. 
%max_N = 10^4
%n = 0
%for E in cremona_curves([1..max_N]):
%     # (d')
%    if E.rank() < 2:
%        continue
%     # (b') 
%    if E.torsion_order() != 1:
%        continue
%    # (c')
%    if E.has_cm():
%        continue
%     
%    n+=1
%                
%print n
Among them, only $50$ curves have a local torsion prime $p$ in the range $11\le p < 10^6$, \Ie $E(\Qp)[p] \neq 0$ listed below:
\begin{center}
  \begin{tabular}{|c|c|c||c|c|c||c|c|c||c|c|c|}  \hline
    	  &curve & $p$ &   & curve & $p$ & & curve & $p$ &  &  curve & $p$\\ 	   \hline 
$1$ & 1639b1 & $2833$ & $14$ & 4976a1 & $11$ & $27$ & 7497c1 & $13$ & $40$ & 9082a1 & $13$ \\
$2$ & 1957a1 & $163$ & $15$ & 5171a1 & $23$ & $28$ & 7520e1 & $11$ & $41$ & 9149c1 & $23$ \\
$3$ & 2299b1 & $31$ & $16$ & 5736f1 & $11$ & $29$ & 7826d1 & $19$ & $42$ & 9395a1 & $37$ \\
$4$ & 2343c1 & $17$ & $17$ & 5763d1 & $23$ & $30$ & 8025d1 & $43$ & $43$ & 9467a1 & $19$ \\
$5$ & 2541c1 & $197$ & $18$ & 5982h1 & $197$ & $31$ & 8025d2 & $43$ & $44$ & 9510c1 & $103$ \\
$6$ & 2728d1 & $443$ & $19$ & 6334b1 & $11$ & $32$ & 8048f1 & $2593$ & $45$ & 9535a1 & $31$ \\
$7$ & 3220a1 & $41$ & $20$ & 6405c1 & $113$ & $33$ & 8384j1 & $157$ & $46$ & 9706b1 & $367$ \\
$8$ & 3333b1 & $19$ & $21$ & 6792a1 & $97$ & $34$ & 8495a1 & $43$ & $47$ & 9783b1 & $11$ \\
$9$ & 3997a1 & $167$ & $22$ & 6848p1 & $23$ & $35$ & 8551a1 & $293$ & $48$ & 9789f1 & $541$ \\
$10$ & 4024b1 & $47$ & $23$ & 6896e1 & $29$ & $36$ & 8768h1 & $17$ & $49$ & 9797b1 & $19$ \\
$11$ & 4279c1 & $13$ & $24$ & 7152a1 & $79$ & $37$ & 8950m1 & $271$ & $50$ & 9865b1 & $11$ \\
$12$ & 4504b1 & $19$ & $25$ & 7233a1 & $11$ & $38$ & 8974c1 & $1063$ &  &  &  \\
$13$ & 4768a1 & $109$ & $26$ & 7366g1 & $11$ & $39$ & 8988d1 & $37$ &  &  &  \\
\hline 
\end{tabular}

\smallskip
Table1: local torsion primes
\end{center}

%%%%%%%%%%%%%%%%%%%%%%%%%%%%%%%%%%%%%%%%%%%%%%%
\section{Elliptic curve over $\Q$}
\label{sec:global}

We keep the notation of the last section. 
%The aim of this section is to prove Theorem~\ref{thm:main} below. 
We further define

\begin{itemize} 
%\item $p$ : a prime number $>2$, 
%\item $E$ : an elliptic curve over $\Q$, 
\item $K_n := \Q(E[p^n])$ (\Cf \cite{106}, Chap.~VIII, Prop.~1.2\ (d)), 
\item $r :=$ the rank of $E(\Q)$ (which is finite by the Mordell-Weil theorem \cite{106}, Chapter~VIII),
\item $P_1,\ldots , P_r \in E(\Q)$ : generators of the free part of $E(\Q)$, and 
%\item $\Delta$ : the minimal discriminant of $E$ (\cite{106}, Chap.\ VIII, Sect.\ 8), 
%\item $[p^n]:E\to E$ : the isogeny multiplication by $p^n$ (\cite{106}, Chap.~III, Sect.~4),
%%\item $N$ : the conductor of $E$ (\cite{151}, Chap.\ IV, Sect.\ 10),
%\item $E[p^n] := E(\Qbar)[p^n]$ : the $p^n$-torsion subgroup of $E(\Qbar)$,  
%\item $G_n := \Gal(K_n/\Q)$, 
\item $L_n := K_n\!\left([p^n]^{-1}P_1,\ldots, [p^n]^{-1}P_r\right)$. 
%\item $\Cl_p(K_n)$ : the $p$-Sylow subgroup of the ideal class group $\Cl(K_n)$.
\end{itemize}

\noindent
Following \cite{L}, Chapter~V, Section~5, 
for each $1\le i \le r$, 
define 
\begin{equation}
	\label{eq:Phii}
	\Phii: \Gal(L_n/K_n) \to E[p^n]; \sigma \mapsto \sigma(Q_i) - Q_i,
\end{equation}
where $Q_i \in E(\Qbar)$ with $[p^n]Q_i = P_i$. 
Since $E[p^n] \subset E(K_n)$, 
the map $\Phii$ does not depend on the choice of $Q_i$. % (\Cf \cite{L}, Chap.\ V, Sect.\ 5). 
%The Galois group $\Gal(L_n/\Q)$ (and hence $G_n \simeq \Gal(L_n/\Q)/\Gal(L_n/K_n)$) 
%acts on $\Gal(L_n/K_n)$ by conjugation: 
%\[
%	\tau \mapsto \sigma \tau \sigma^{-1} \quad (\tau \in \Gal(L_n/K_n), \sigma \in \Gal(L_n/\Q)),  
%\]
%% \Cf \cite{L}, p.\ 116
%the map $\Phii$ is a $G_n$-homomorphism. 
These homomorphisms $(\Phii)_{1\le i \le r}$ induce %$G_n$-
an injective homomorphism  
\begin{equation}
	\label{eq:Phi}
\Phi: \Gal(L_n/K_n) \to E[p^n]^{\oplus r} ; \sigma \mapsto (\Phii(\sigma))_{i}.
\end{equation}
%***********
% For $\sigma \in G_n, \tau \in \Gal(L_n/K_n)$, 
%take an extension $\wt{\sigma} \in \Gal(L_n/\Q)$ of $\sigma$ and put
%$\sigma.\tau := {\wt{\sigma}}^{-1}\tau \sigma $ in $\Gal(L_n/\Q)$. 
%For $x\in K_n, 
%\sigma.\tau(x) = {\wt{\sigma}}^{-1}\tau \wt{\sigma}(x) = {\wt{\sigma}}^{-1}\tau\sigma(x) = \sigma^{-1}\sigma(x) = x$. 
%$\sigma.\tau \in \Gal(L_n/K_n)$. 
%
From $E[p^n] \simeq (\Z/p^n\Z)^{\oplus 2}$  (\cite{106}, Chap.~III, Cor.~6.4) 
the extension $L_n/K_n$ is an abelian extension with 
$[L_n:K_n] \le p^{2nr}$.

\subsection*{Inertia subgroups}
For any prime number $l$ and a prime ideal $\l$ in (the ring of integers of) $K_n$ above $l$ 
(we write $\l \mid l$ in the following), we denote by 

\begin{itemize}
\item $I_{\l}$ : the inertia subgroup of $\Gal(L_n/K_n)$ at $\l$ 
(for $L_n/K_n$ is abelian, the inertia subgroup $I_{\l}$ is independent 
of a choice of a prime ideal in $L_n$ above $\l$), and 
\item $I_l := \Braket{I_{\l}\, ; \mbox{prime ideal $\l\mid l$ in $K_n$}}$: 
the subgroup of $\Gal(L_n/K_n)$ generated by $I_{\l}$ for all $\l\mid l$.
\end{itemize}

\noindent
For any prime $\l \mid l$ of $K_n$, 
and a prime $\L$ of $L_n$ above $\l$ (we write $\L\mid \l$\,), we denote by 

\begin{itemize}
	\item $(K_n)_{\l}$ : the completion of $K_n$ at $\l$, and 
	\item $(L_n)_{\L}$ : the completion of $L_n$ at $\L$. 
\end{itemize}

\begin{lemma}
\label{lem:local}
We assume the condition \Tor. 
Then, we have $\# I_p \le p^{2n}$.
\end{lemma}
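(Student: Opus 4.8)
The plan is to use condition \Tor\ to replace, in the local analysis at $p$, the $r$ division points $Q_1,\dots,Q_r$ by a single one. First I would note, via Lemma~\ref{lem:str}~(iv), that $E(\Qp)\simeq\Zp\oplus E(\Qp)_{\tor}$ with $E(\Qp)_{\tor}$ finite; since \Tor\ gives $E(\Qp)[p]=0$, the group $E(\Qp)_{\tor}$ has order prime to $p$, so
\[
	E(\Qp)/p^n E(\Qp)\simeq\Z/p^n\Z .
\]
Viewing $P_1,\dots,P_r\in E(\Q)\subseteq E(\Qp)$, their images generate a cyclic subgroup of this quotient, so I would fix once and for all a point $g:=\sum_i c_i P_i$ (for suitable $c_i\in\Z$) whose image generates that subgroup, together with integers $a_1,\dots,a_r$ and points $S_1,\dots,S_r\in E(\Qp)$ such that
\[
	P_i=a_i g+p^n S_i\quad\text{in}\quad E(\Qp),\qquad i=1,\dots,r .
\]
Here $g$ and the $a_i$ depend only on $p$, not on any prime of $K_n$. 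Put $Q_g:=\sum_i c_i Q_i\in L_n$, with the $Q_i$ as in \eqref{eq:Phii}, so that $[p^n]Q_g=g$; then $\sigma(Q_g)-Q_g\in E[p^n]$ for every $\sigma\in\Gal(L_n/K_n)$ because $g\in E(K_n)$.

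Next I would fix a prime $\l\mid p$ of $K_n$ and a prime $\L\mid\l$ of $L_n$, and let $D_{\L}\le\Gal(L_n/K_n)$ be the decomposition group at $\L$, which I identify with $\Gal((L_n)_{\L}/(K_n)_{\l})$ via an embedding $\iota_{\l}:\Qbar\inj\Qpbar$ inducing $\l$. Since $[p^n]\bigl(\iota_{\l}(Q_i)-a_i\iota_{\l}(Q_g)-S_i\bigr)=P_i-a_i g-p^n S_i=0$, there is $w_i\in E[p^n]$ with $\iota_{\l}(Q_i)=a_i\iota_{\l}(Q_g)+S_i+w_i$. Every $\sigma\in D_{\L}$ fixes $(K_n)_{\l}$, hence fixes $E[p^n]\subseteq E(K_n)$ and the point $S_i\in E(\Qp)$; applying $\sigma$ to this identity and transporting back along $\iota_{\l}$ gives
\[
	\Phii(\sigma)=\sigma(Q_i)-Q_i=a_i\bigl(\sigma(Q_g)-Q_g\bigr),\qquad i=1,\dots,r,
\]
so the terms $S_i$ and $w_i$ (which do depend on $\l$) drop out. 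Consequently the injective homomorphism $\Phi$ of \eqref{eq:Phi} sends $D_{\L}$ into the subgroup
\[
	H:=\set{(a_1 x,a_2 x,\dots,a_r x)\,;\,x\in E[p^n]}\le E[p^n]^{\oplus r},
\]
and $\#H\le\#E[p^n]=p^{2n}$.

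The decisive — and, I expect, the only genuinely delicate — point is that $H$ is one and the same subgroup for every prime $\l\mid p$ of $K_n$, because $g$ and the $a_i$ were chosen at $p$ before any such $\l$ was picked. Granting this, $I_{\l}\subseteq D_{\L}$ yields $\Phi(I_{\l})\subseteq H$ for all $\l\mid p$, hence $\Phi(I_p)=\Phi\bigl(\Braket{I_{\l}\,;\,\l\mid p}\bigr)\subseteq H$, and injectivity of $\Phi$ gives $\#I_p\le\#H\le p^{2n}$. This uniformity in $\l$ is exactly the crux: handling each $\l$ in isolation only bounds $\#I_{\l}$ by $p^{2n}$ and leaves open that the various $I_{\l}$ generate a larger group, so one must exploit that \Tor\ — equivalently, the cyclicity of $E(\Qp)/p^n E(\Qp)$ — makes the target $H$ independent of $\l$.
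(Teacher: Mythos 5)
Your proof is correct and follows essentially the same route as the paper: both use \Tor\ to see that $E(\Qp)/[p^n]E(\Qp)\simeq \Z/p^n\Z$, fix once and for all (independently of the prime above $p$) a single point whose $p^n$-division field controls all the others locally, and then bound $I_p$ through the injective map $\Phi$ into $E[p^n]^{\oplus r}$. The only difference is packaging — the paper realizes the uniform target as the subfield $K_n'=K_n\!\left([p^n]^{-1}P_i\right)$ with $i$ chosen so that $\ord_p(a_i)$ is minimal and argues via unramifiedness of $L_n/K_n'$ above $p$, whereas you work directly with decomposition groups and the diagonal subgroup $H$ — and your version has the merit of making fully explicit why the bound applies to the group generated by all the $I_{\l}$, $\l\mid p$, rather than to each $I_{\l}$ separately.
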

\begin{proof}
%We denote also by $E$  
%the base change $E\otimes_{\Q} \Qp$ of the elliptic curve $E$ to $\Qp$. 
By the structure theorem on $E(\Qp)$ (Lem.~\ref{lem:str} (iv)), 
% for compact $p$-adic Lie groups
%(\cite{M}, Thm.~7, see also \cite{CL}, Thm.~21), 
%	By Mattuck's theorem (\cite{M}, Thm.\ 7, see also \cite{106}, Chap.~VII, Prop.~6.3),   
\[
E(\Qp) \simeq \Zp \oplus E(\Qp)_{\tor}.
\]
%(as topological groups), 
%where $E(\Qp)_{\tor}$ is the torsion subgroup of $E(\Qp)$ which is finite. 
From the condition \Tor, we have 
$E(\Qp)_{\tor}/[p^n]E(\Qp)_{\tor}  = 0$ 
%
%In fact, 
%\[
%E(\Qp)_{\tor}/p^n \simeq E(\Qp)_{p-\tor}/p^n 
%\] 
%For any $P\in E(\Qp)[p^m] \ssm E(\Qp)[p^{m-1}]$ with $m>1$, 
%$p^{m-1}P \in E(\Qp)[p] = \set{0}$ 
%thus $P \in E(\Qp)[p^{m-1}]$. 
%
and hence 
\[
  E(\Qp)/[p^n]E(\Qp) \simeq \Z/p^n\Z.
\]
Let $\ol{P}\in E(\Qp)/[p^n]E(\Qp)$ 
(the residue class represented by a point $P\in E(\Qp)$) be a generator of 
the cyclic group $E(\Qp)/[p^n]E(\Qp)$ and, for each index $1\le i \le r$, write 
\[
  \ol{P_i} = \ol{a_i}\cdot \ol{P}\quad \mbox{in $E(\Qp)/[p^n]E(\Qp)$} 
\]
for some $\ol{a_i} \in \Z/p^n\Z$\ $(a_i\in \Z)$. 
Take $1\le i\le r$ such that 
\[
\ord_p(a_i) \le \ord_p(a_j)
\] 
for all $1\le j\le r$. 

For any prime $\P \mid p$ of $L_n$,  
we denote by $\p$ the prime in $K_n$ below $\P$.
%we denote by 
%
%\begin{itemize}
%	\item $(K_n)_{\p}$ : the completion of $K_n$ at $\p$, and 
%	\item $(L_n)_{\P}$ : the completion of $L_n$ at $\P$. 
%\end{itemize}
Using the chosen index $i$, we obtain 
\begin{equation}
\label{eq:mono_p}
	(L_n)_{\P} = (K_n)_{\p}\!\left([p^n]^{-1}P_i\right).
\end{equation}
%In particular, the condition \Mono\ for $l=p$ holds.
%
%Write $a_i = p^sm$, $a_j = p^tn$ with $t\le s$, $m,n$ are prime to $p$. 
%We have 
%\[
%na_i = p^snm = p^{s-t}m p^tn = p^{s-t}m a_j.
%\]
%Take $Q \in E(\Qpbar)$ with $p^nQ = P$. 
%Then, $p^n(a_i Q) = a_iP = P_i$. 
%\[
%  \K_n(\textstyle\frac{1}{p^n}P_j) = \K_n(a_jQ) 
%  \supset \K_n(p^{s-t}ma_jQ) = \K_n(na_iQ) = \K_n(a_iQ) = \K_n(\textstyle\frac{1}{p^n}P_i).
%\]  
%
Put $K_n' := K_n\!\left([p^n]^{-1}P_i\right) \subset L_n$. 
From the equality \eqref{eq:mono_p}, 
the extension $L_n/K_n'$ is unramified 
(at all primes in $K_n'$) above $\p$. 
As the extension $K_n/\Q$ is Galois, 
this extension $L_n/K_n'$ is unramified above $p$. 
Since $I_p \cap \Gal(L_n/K_n') = \set{1}$, 
the restriction $\Phii|_{I_p}: I_p \to E[p^n]$ of $\Phii$ 
defined in \eqref{eq:Phii} is injective and hence
$\#I_p \le p^{2n}$. 
\end{proof}

%Recall that 
%the \textbf{Tamagawa number} $c_l$ at a prime $l$ 
%for $E$ is defined by
%\begin{equation}
%\label{def:Tam}
%	c_l := (E(\Ql):E_0(\Ql)), 
%\end{equation}
%where $E_0(\Ql)$ is the subgroup of points whose reduction modulo $l$ 
%to $\Ebar(\Fl)$ is not a singular point (\Cf \cite{106}, Chap.~VII, Sect.~2). 
%It is known that $c_l \le 4$ except the case where 
%$E$ has split multiplicative reduction (\cite{106}, Chap.~VII, Thm.~6.1). 

%% 
% Lem
\begin{lemma}
\label{lem:local_l}
	Let $l$ be a prime number with $l\neq p$.

	\begin{enumerate}
		\item 
		We have $\#I_l \le p^{2n}$. 
		 
		\item 
		Suppose that $E$ has multiplicative reduction at $l$. 
		We have 
		$\#I_l \le p^{2\nu_l}$, 
		where 
\[
	\qquad \nu_l\! :=\!
	\begin{cases}
		\min\set{\ord_p(\ord_l(\Delta)),n},
		\ \mbox{if $E$ has split multiplicative reduction at $l$},\\
		0,\ \mbox{if $E$ has non-split multiplicative reduction at $l$}.
	\end{cases}	
\]
		\item 
		Suppose that $E$ has additive reduction at $l$. 
		We further assume the following conditions: 
		\begin{itemize}
		\item[$(\mathrm{a})$] $p>3$,  or
		\item[$(\mathrm{b})$] $c_l \neq 3$, where $c_l$ is the Tamagawa number at $l$ $($\Cf \eqref{def:Tam}$)$. 
		\end{itemize}		
		Then, we have $\# I_l = 1$.
	\end{enumerate}
%	  
%	suppose the following condition:
%	
%\begin{itemize}
%	\item[\Mono] 
%	For any prime ideal 
%	$\l \mid l$ in $K_n$, 
%	there exists an index $1\le i \le r$ such that 
%	\begin{equation}
%	\label{eq:mono}
%		(L_n)_{\L} = (T_n)_{\L}\!\left([p^n]^{-1}P_i\right) 
%	\end{equation}
%	for any prime $\L\mid \l$ in $L_n$, 
%	where 
%	$(T_n)_{\L} := ((L_n)_{\L})^{I_{\l}}$ is the inertia field of $\L$ 
%	over $(K_n)_{\l}$  
%	which is the fixed field of $I_{\l}$ (\Cf \cite{N}, Chap.~II, Def~9.10).
%\end{itemize}
%	Then, we have the following inequality: 
%%	\[
%	$\# I_l \le p^{2n}.$
%%	\]	
\end{lemma}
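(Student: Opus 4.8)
The plan is to follow the same strategy as in Lemma~\ref{lem:local}: for each part, analyze the local extension $(L_n)_{\L}/(K_n)_{\l}$ at a prime $\l \mid l$, show that after adjoining a suitable single preimage $[p^n]^{-1}P_i$ the remaining extension is unramified above $l$, and then conclude that the inertia subgroup injects into a quotient of $E[p^n]^{\oplus r}$ of the claimed size via the maps $\Phij$ of \eqref{eq:Phii}. The key local input is the structure of $E(\Ql)$ and of $E(\Ql)/[p^n]E(\Ql)$ provided by Lemma~\ref{lem:str}. For part (i), I would argue exactly as in Lemma~\ref{lem:local}: by Lemma~\ref{lem:str}(iv) we have $E(\Ql) \simeq \Zl \oplus E(\Ql)_{\tor}$, so $E(\Ql)/[p^n]E(\Ql)$ is generated by at most two elements (a generator of $\Zl/p^n\Zl$ together with a generator of the $p$-part of the torsion, if the latter is nontrivial); more crudely, it is a quotient of $(\Z/p^n\Z)^{\oplus 2}$. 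Picking $i$ so that $\ord_p$ of the "coordinates" of $\ol{P_i}$ is minimal, one sees $(L_n)_{\L} = (K_n)_{\l}([p^n]^{-1}P_i)$ for every $\L \mid \l$; but here I may need two indices rather than one, since $E(\Ql)/[p^n]E(\Ql)$ can be noncyclic — so the cleanest formulation is that all the extensions $(K_n)_{\l}([p^n]^{-1}P_j)$ are contained in a compositum of at most two of them, whence $L_n$ is unramified above $l$ over that compositum, and $\Phi|_{I_l}$ lands in a rank-$2$ piece, giving $\#I_l \le p^{2n}$.

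For part (ii), the refinement comes from bounding the $p$-part of $E(\Ql)/[p^n]E(\Ql)$ more tightly. By Lemma~\ref{lem:str}(iv), $E(\Ql)_{\tor}$ is finite, and the short exact sequence $0 \to E_0(\Ql) \to E(\Ql) \to E(\Ql)/E_0(\Ql) \to 0$ together with Lemma~\ref{lem:str}(i),(ii-a) shows $E_0(\Ql)[p] = 0$ (as in the proof of Lemma~\ref{lem:Delta}), so the $p$-torsion of $E(\Ql)$ injects into $E(\Ql)/E_0(\Ql)$. In the non-split case this quotient has order at most $2$, hence no $p$-torsion for $p > 2$, so $E(\Ql)/[p^n]E(\Ql) \simeq \Z/p^n\Z$ is cyclic but — more importantly — I want to show $I_l$ is trivial or at least bounded by $p^{2\nu_l} = p^0 = 1$; here the point is that $E_1(\Ql) \simeq \Zl$ is divisible enough and $\Ebarns(\F_{l^2})^\times$ has order prime to $p$ when $p \neq l$... actually $\#(\F_{l^2})^\times = l^2 - 1$ which may be divisible by $p$, so this needs care. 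In the split case, $E(\Ql)/E_0(\Ql) \simeq \Z/\ord_l(\Delta)\Z$ by Lemma~\ref{lem:str}(iii-a), so its $p$-part has order $p^{\ord_p(\ord_l(\Delta))}$; combined with $E_0(\Ql)[p]=0$ one gets that the $p$-part of $E(\Ql)_{\tor}$ has order at most $p^{\ord_p(\ord_l(\Delta))}$, and thus $E(\Ql)/[p^n]E(\Ql)$ is a quotient of $(\Z/p^n\Z) \oplus (\Z/p^{\nu_l}\Z)$. Choosing indices to extract a generator of the $\Z/p^n\Z$-part, the extension obtained by adjoining that one preimage is unramified above $l$, so $I_l$ injects into the remaining $\Z/p^{\nu_l}\Z$-controlled piece of $E[p^n]^{\oplus r}$, i.e. into something of exponent $p^{\nu_l}$ and rank $\le 2$, yielding $\#I_l \le p^{2\nu_l}$.

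For part (iii), the additive case, Lemma~\ref{lem:add} (under hypothesis (a) or (b)) gives $E(\Ql)[p] = 0$, hence by the structure theorem $E(\Ql)/[p^n]E(\Ql) \simeq \Zl/p^n\Zl \simeq \Z/p^n\Z$ is cyclic with the torsion contributing nothing. Then exactly as in Lemma~\ref{lem:local} one index $i$ suffices: $(L_n)_{\L} = (K_n)_{\l}([p^n]^{-1}P_i)$ for all $\L \mid \l$, so $L_n/K_n([p^n]^{-1}P_i)$ is unramified above $l$, and — the extra gain over part (i) — since $P_i$ itself maps into the full $E(\Ql)/[p^n]E(\Ql)$, in fact every $P_j$ is already a multiple of $P_i$ modulo $[p^n]$, so $(L_n)_{\L} = (K_n)_{\l}$ and the extension is unramified above $l$ altogether, giving $\#I_l = 1$.

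The main obstacle I expect is part (ii): correctly pinning down the $p$-part of $E(\Ql)/[p^n]E(\Ql)$ in the split multiplicative case and verifying that $\nu_l = \min\{\ord_p(\ord_l(\Delta)), n\}$ is exactly the right exponent — in particular handling the truncation by $n$ and making sure that the "unramified after adjoining one preimage" step leaves precisely a cyclic quotient of order $p^{\nu_l}$ (times the second coordinate of $E[p^n]^{\oplus r}$) for $I_l$ to inject into, rather than something larger. The Tate curve description $E((K_n)_{\l}) \simeq (K_n)_{\l}^\times / q^{\Z}$ over the relevant (possibly quadratic) unramified extension, with $\ord_{\l}(q)$ tied to $\ord_l(\Delta)$, is the natural tool to make this precise, and I would invoke it to control the ramification in $(K_n)_{\l}([p^n]^{-1}P_i)/(K_n)_{\l}$.
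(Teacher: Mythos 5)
Your overall strategy---reducing everything to the structure of $E(\Ql)/[p^n]E(\Ql)$ exactly as in Lemma~\ref{lem:local}---runs into two concrete problems. First, a recurring slip: since $l\neq p$, multiplication by $p^n$ is an automorphism of $\Zl$, so $\Zl/p^n\Zl=0$, \emph{not} $\Z/p^n\Z$. Hence $E(\Ql)/[p^n]E(\Ql)$ is just the $p^n$-truncation of the finite torsion subgroup, with no contribution from the $\Zl$-part. This actually makes part (iii) immediate, and it is the paper's argument there: under (a) or (b), Lemma~\ref{lem:add} gives $E(\Ql)[p]=0$, hence $E(\Ql)/[p^n]E(\Ql)=0$, so every $P_j$ lies in $[p^n]E(\Ql)$ and $(L_n)_{\L}=(K_n)_{\l}$. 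Your version, which retains a nontrivial cyclic quotient $\Z/p^n\Z$ with generator the class of some $P_i$, would only yield $(L_n)_{\L}=(K_n)_{\l}([p^n]^{-1}P_i)$, a possibly ramified extension, and so would \emph{not} give $\#I_l=1$; the conclusion you reach is right but does not follow from the intermediate claim as you state it.

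Second, and more seriously, part (i) has a genuine gap. As you note, the relevant quotient can have rank $2$ (e.g.\ when $\mu_p\subset\Ql$ and the full $p$-torsion is $\Ql$-rational), so your argument needs a compositum of \emph{two} extensions $(K_n)_{\l}([p^n]^{-1}P_{i_1},[p^n]^{-1}P_{i_2})$; the inertia (indeed the whole decomposition group) then only embeds into $E[p^n]^{\oplus 2}$, giving $\#I_l\le p^{4n}$ rather than the asserted $p^{2n}$. The missing idea is the one the paper actually uses and which makes no appeal to the structure of $E(\Ql)$ at all: since $l\neq p$ and $\Gal(L_n/K_n)$ is a $p$-group, the extension is \emph{tamely} ramified above $l$, so each $I_{\l}$ is a cyclic $p$-group; its subgroups form a chain, hence among the fields $(T_n)_{\L}([p^n]^{-1}P_j)$ over the inertia field $(T_n)_{\L}$ there is a single maximal one, indexed by some $i$, and $L_n/K_n([p^n]^{-1}P_i)$ is unramified above $l$. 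Then $\Phii|_{I_l}$ is injective into a single copy of $E[p^n]$, which is what gives $p^{2n}$. Finally, for part (ii) the paper does not reprove anything: it simply cites \cite{SY}, Theorem~4.1. Your Tate-curve sketch points in the right direction but, as you concede, is not carried out, and as written it inherits the same spurious $\Z/p^n\Z$ factor from $\Zl/p^n\Zl$.
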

\begin{proof}
%We denote also by $E$  
%the base change $E\otimes_{\Q} \Ql$ of the elliptic curve $E$ to $\Ql$. 
%
%\sn
(i) 
	Take any $\l\mid l$ in $K_n$. 
	For a prime $\L\mid \l$ in $L_n$, let 
	$(T_n)_{\L} := ((L_n)_{\L})^{I_{\l}}$ be the inertia field of $\L$ 
	over $(K_n)_{\l}$  
	which is the fixed field of $I_{\l}$ (\Cf \cite{N}, Chap.~II, Def~9.10). 
	Since $l\neq p$ 
	%and the extension $L_n/K_n$ is an abelian 
	%$p$-power extension, 
	the extension $L_n/K_n$  
	is tamely ramified at any prime $\L\mid \l$ in $L_n$. 
	The inertia subgroup $I_{\l}  = \Gal((L_n)_{\L}/(T_n)_{\L})$ is cyclic 
	(\Cf \cite{N}, Chap.~II, Sect.~9). 
	There exists $1 \le i \le r$ such that 
	\[
	(T_n)_{\L}([p^n]^{-1}P_j) \subset (T_n)_{\L}([p^n]^{-1}P_i)
	\]
	for any $1\le j\le r$. 
	Since 
	$I_{\l}$ does not depend on the choice of $\L\mid \l$ in $L_n$, 
	the index $i$ above can be chosen independent of $\L\mid \l$.  
%	The image of the restriction  
%	$\Phi|_{I_{\l}} : I_{\l} \inj E[p^n]^{\oplus r}$ 
%	 of $\Phi$ is cyclic. 
%	There exists an index $1\le i \le r$ such that 
%	the composition 
%	\[
%	\Phii|_{I_{\l}} = \pr_i\circ\, \Phi|_{I_{\l}}: I_{\l} \inj E[p^n]^{\oplus r} \onto{\pr_i} E[p^n] 
%	\] 
%	with the $i$-th projection $\pr_i:E[p^n]^{\oplus r}\to E[p^n]$
%	is injective. 
%	Since 
%	$I_{\l}$ does not depend on the choice of $\L\mid \l$ in $L_n$, 
%	the index $i$ above can be chosen independent of $\L\mid \l$.  
%	The subgroup $\Gal((L_n)_{\L}/(T_n)_{\L}([p^n]^{-1}P_i)$ of 
%	$I_{\l} = \Gal(\Gal((L_n)_{\L}/(T_n)_{\L})$ is trivial. 
	We obtain  
	\begin{equation}
		\label{eq:mono}
	(L_n)_{\L}  = (T_n)_{\L}\!\left([p^n]^{-1}P_i\right)
	\end{equation}
for any prime $\L\mid \l$. 

%For any prime ideal $\l\mid l$ in $K_n$, 
%using the chosen index $i$ above, 
Put $K_n' := K_n\!\left([p^n]^{-1}P_i\right) \subset L_n$.  
%First, we show that  
%the extension $L_n/K_n'$ is unramified 
%at all primes $\L\mid l$ in $L_n$. 
The extension $(T_n)_{\L}/(K_n)_{\l}$ of local fields 
is unramified from the very definition of $(T_n)_{\L}$ 
for any prime $\L\mid \l$ in $L_n$.   
Using the equality \eqref{eq:mono} the extension 
\[
(L_n)_{\L} = (T_n)_{\L}([p^n]^{-1}P_i)\ \mbox{of}\ 
(K_n)_{\l}([p^n]^{-1}P_i) 
\]
is also unramified (\cite{N}, Chap.~II, Prop.~7.2). 
This implies that $L_n/K_n'$ is unramified at 
all primes $\L\mid \l$ in $L_n$. 
As the extension $K_n/\Q$ is Galois, 
this extension $L_n/K_n'$ is unramified above $l$. 
Since $I_l \cap \Gal(L_n/K_n') = \set{1}$, 
the restriction $\Phii|_{I_l}: I_l \to E[p^n]$ of $\Phii$ 
defined in \eqref{eq:Phii} is injective and hence
$\#I_l \le p^{2n}$.

\sn
(ii) 
This assertion is \cite{SY}, Theorem 4.1. 

\sn
(iii)
By Lemma \ref{lem:str} (iv), we have 
%	By Mattuck's theorem (\cite{M}, Thm.\ 7, see also \cite{106}, Chap.~VII, Prop.~6.3),   
\[
E(\Ql) \simeq \Zl \oplus E(\Ql)_{\tor}.
\]
From $E(\Ql)[p]=0$ (Lem.~\ref{lem:add}), 
%we have 
%$E(\Ql)_{\tor}/[p^n]E(\Ql)_{\tor}  = 0$
%
%In fact, 
%\[
%E(\Qp)_{\tor}/p^n \simeq E(\Qp)_{p-\tor}/p^n 
%\] 
%For any $P\in E(\Qp)[p^m] \ssm E(\Qp)[p^{m-1}]$ with $m>1$, 
%$p^{m-1}P \in E(\Qp)[p] = \set{0}$ 
%thus $P \in E(\Qp)[p^{m-1}]$. 
%
%and hence 
we have
\[
  E(\Ql)/[p^n]E(\Ql) =0.
\]
Hence,  
$P_i \in [p^n]E(\Ql)$ for each $i$. 
This implies that, for any prime $\l\mid l$ in $K_n$, 
$ (K_n)_{\l}([p^n]^{-1}P_i) = (K_n)_{\l} $ and hence 
\[
(L_n)_{\L} = (K_n)_{\l}
\]
for any $\L\mid \l$ in $L_n$. 
In particular, 
$L_n/K_n$ is unramified at 
all primes $\L\mid \l$ in $L_n$. 
As the extension $K_n/\Q$ is Galois, 
this extension $L_n/K_n$ is unramified above $l$. 
Hence $I_l = \set{1}$. 
\end{proof}

\subsection*{Main theorem}
In the rest of this section, we show the following theorem: %Theorem~\ref{thm:main_intro}.
 
\begin{theorem}
\label{thm:main}
For a prime $p>2$, and an elliptic curve $E$ over $\Q$ 
with minimal discriminant $\Delta$, 
put $K_n := \Q(E[p^n])$ and 
\[
	\quad \nu_l :=
	\begin{cases}
		\min\set{\ord_p(\ord_l(\Delta)),n},\ \mbox{if $E$ has split multiplicative reduction at $l$},\\
		n,\  \mbox{if $p=3$, $E$ has additive reduction at $l$, and $c_l = 3$},\\ 
		0,\  \mbox{otherwise},
	\end{cases}
\]
where $c_l$ is the Tamagawa number at $l$ $($\Cf \eqref{def:Tam}$)$.
We assume the following conditions:  
\begin{itemize}
	\item[\Full] $\Gal(K_1/\Q) \simeq GL_2(\Z/p\Z)$, and 
	\item[\Tor] $E(\Qp)[p] = 0$. 
\end{itemize}
Then, for all $n\in \Z_{\ge 1}$, 
the exponent $\kappa_n$  of $\#\Cl_p(K_n) = p^{\kappa_n}$ 
satisfies the following inequality: 
\[
	\kappa_n \ge 2n(r - 1) - 2 \sum_{l\neq p,\, l\mid \Delta} \nu_l, 
\] 
where $r$ is the rank of $E(\Q)$.
\end{theorem}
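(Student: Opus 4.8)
The plan is to bound $\#\Cl_p(K_n)$ from below by the degree of the largest everywhere-unramified subextension of the abelian extension $L_n/K_n$. Write $G_n:=\Gal(L_n/K_n)$ and let $N_n\le G_n$ be the subgroup generated by the inertia subgroups $I_{\l}$ at all finite primes $\l$ of $K_n$; then $L_n^{N_n}/K_n$ is unramified at every finite place, and since $K_n\supseteq\Q(\zeta_{p^n})$ is totally imaginary it is unramified everywhere. Hence $L_n^{N_n}$ lies in the Hilbert class field of $K_n$, so $\Gal(L_n^{N_n}/K_n)$ is a quotient of $\Cl(K_n)$ and, being a $p$-group, of $\Cl_p(K_n)$. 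Therefore
\[
 p^{\kappa_n}\ \ge\ [L_n^{N_n}:K_n]\ =\ [G_n:N_n]\ =\ [L_n:K_n]/\#N_n ,
\]
and it remains to bound $[L_n:K_n]$ from below and $\#N_n$ from above.

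For the degree, \Full\ makes $E[p]$ an irreducible $\Gal(\Qbar/\Q)$-module, so $E(\Q)[p]=0$; thus $E(\Q)_{\tor}$ has trivial $p$-part and $E(\Q)/[p^n]E(\Q)\simeq(\Z/p^n\Z)^{\oplus r}$ with basis the classes of $P_1,\dots,P_r$. Kummer theory for the isogeny $[p^n]$ (valid since $E[p^n]\subset E(K_n)$) gives $G_n\simeq\Hom(\Lambda_n,E[p^n])$, where $\Lambda_n$ is the image of $\Z P_1+\dots+\Z P_r$ in $E(K_n)/[p^n]E(K_n)$, so I must check that $\Lambda_n$ is free of rank $r$ over $\Z/p^n\Z$, i.e. that $E(\Q)/[p^n]E(\Q)\to E(K_n)/[p^n]E(K_n)$ is injective. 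The kernel of this map embeds into $H^1(\Gal(K_n/\Q),E[p^n])$, which vanishes: for $p>3$, \Full\ propagates to $\Gal(K_n/\Q)\simeq\GL_2(\Z/p^n\Z)$, so $\Gal(K_n/\Q)$ contains the scalar $-1\in\GL_2(\Z/p^n\Z)$, which acts on $E[p^n]$ by multiplication by $-1$, and $(-1)-1=-2$ is a unit in $\Z/p^n\Z$. (This is precisely the Kummer-theoretic input of \cite{SY}.) Hence $[L_n:K_n]=\#\Hom((\Z/p^n\Z)^{\oplus r},E[p^n])=p^{2nr}$.

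For $\#N_n$ we have $\#N_n\le\prod_l\#I_l$ in the abelian group $G_n$, where $I_l=\langle I_{\l}:\l\mid l\rangle$. If $l\ne p$ and $l\nmid\Delta$, then $E$ has good reduction at $l$, so $K_n/\Q$ is unramified at $l$, and the $p^n$-division points of the $P_i$ add no further ramification (by the usual reduction argument, since $E[p^n]$ injects into $\Ebar(\ol{\F}_l)$); thus $L_n/\Q$ — which is Galois — is unramified at $l$ and $I_l=1$. If $l\ne p$ has additive reduction, $I_l=1$ by Lemma~\ref{lem:local_l}(iii) when $p>3$ or $c_l\ne3$, and in the remaining case $p=3,\,c_l=3$ we use $\#I_l\le p^{2n}=p^{2\nu_l}$ from Lemma~\ref{lem:local_l}(i). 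If $l\ne p$ has multiplicative reduction, $\#I_l\le p^{2\nu_l}$ by Lemma~\ref{lem:local_l}(ii) (with $\nu_l=0$ in the non-split case). Finally $\#I_p\le p^{2n}$ by Lemma~\ref{lem:local} — the only place \Tor\ enters. Altogether $\#N_n\le p^{2n}\prod_{l\ne p,\,l\mid\Delta}p^{2\nu_l}$, and combining with the displayed inequality gives $p^{\kappa_n}\ge p^{2n(r-1)-2\sum_{l\ne p,\,l\mid\Delta}\nu_l}$, which is the asserted bound (vacuously true when the exponent is negative).

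The only nonformal ingredient is the degree computation $[L_n:K_n]=p^{2nr}$ — equivalently, that the independent rational points $P_1,\dots,P_r$ remain $\Z/p^n\Z$-independent modulo $[p^n]$ over $K_n$. This is where \Full\ is indispensable and is the part imported from \cite{SY}; granting it together with the local Lemmas~\ref{lem:str}--\ref{lem:local_l}, everything else is routine.
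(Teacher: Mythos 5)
Your proof is correct and follows essentially the same route as the paper's: bound $\#\Cl_p(K_n)$ from below by the degree of the unramified part of $L_n/K_n$ (which lands in the Hilbert $p$-class field), obtain $[L_n:K_n]=p^{2nr}$ from \Full\ via the bijectivity of the Kummer map $\Phi$ (imported from \cite{SY15}, Thm.~2.4, exactly as the paper does), and bound the inertia subgroups using Lemmas~\ref{lem:local} and \ref{lem:local_l}. The only differences are presentational — your explicit treatment of the archimedean places and of the good-reduction primes $l\nmid\Delta$, and your sketch of the $H^1$-vanishing, which as written only covers $p>3$ whereas the theorem allows $p=3$; since you (like the paper) ultimately defer that step to the cited result, this is not a gap.
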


\begin{proof}
As in the beginning of this section, first we choose 
\begin{itemize}
	\item $P_1,\ldots , P_r \in E(\Q)$ : generators of the free part of $E(\Q)$, and put
\item $L_n := K_n\!\left([p^n]^{-1}P_1,\ldots, [p^n]^{-1}P_r\right)$. 
\end{itemize}
Next, we define 

\begin{itemize}
	\item $\wt{K}_n$\,: the Hilbert $p$-class field, that is, 
the maximal unramified abelian $p$-extension of $K_n$, and 
	\item $I := \Braket{I_{l}\, ; l = p \mbox{ or } l\mid \Delta}	\subset \Gal(L_n/K_n)$\,: 
	the subgroup generated by the inertia subgroups $I_p$ and $I_l$ for all prime number $l\mid \Delta$. 
\end{itemize}

\noindent
By class field theory (\Cf \cite{N},Chap.~VI, Prop.~6.9), we have 
\begin{equation}
\label{eq:An}
		\# \Cl_p(K_n) = [\wt{K}_n:K_n] \ge [L_n \cap \wt{K}_n:K_n] = \frac{[L_n:K_n]}{[L_n:L_n\cap \wt{K}_n]}.
\end{equation}
From the condition \Full\ and $p>2$, 
$\Phi:\Gal(L_n/K_n) \to E[p^n]^{\oplus r}$ defined in \eqref{eq:Phi} is bijective (\cite{SY15}, Thm.~2.4\footnote{
%Precisely, 
%the condition \Full\ implies 
%that the map defined similarly to $\Phi$ gives 
%an isomorphism 
%$\Gal(L_1/K_1)\isomto E[p]^{\oplus r}$ 
%(Thm.~2.1 in \cite{SY}). 
%From \cite{L}, Chap.~V, Lemma 1(see also \cite{SY15}, Lem.~2.3), 
%the bijection $\Gal(L_1/K_1)\isomto E[p]^{\oplus r}$ 
%implies that 
%$\Phi$  is bijective as required. 
In \cite{SY}, it is considered the case where $p\ge 11$. 
However, the proof of Theorem 2.4 in \cite{SY} works 
for $p>2$. 
%
%Here, we need the assumption $p>2$. 
%In fact, in \cite{SY}, Theorem~2.3, 
%for the sake of the case $p=2$, 
%to show that $\Phi$ is bijective  
%it is assumed $\Gal(K_n/\Q) \simeq GL_2(\Z/p^n\Z)$ 
%for any $n\in \Z_{\ge 1}$.
%Here, we need the assumption $p>2$. 
%For the proof of \cite{SY}, Lem.~2.2, 
%in the case of $p=2$, 
%$G_n \simeq \GL_2(\Z/p^n)$  for all $n\ge 1$, 
%and the elliptic curve $E$ has multiplicative reduction at $p=2$. 
}, see also \cite{L}, Chap.~V, Lem.~1) 
and hence 
\begin{equation}
	\label{eq:deg}
 	[L_n:K_n] = p^{2nr}.
\end{equation}
% In general, $L_n/K_n$ is an abelian extension of exponent $p^n$ by \cite{106}, VIII, Prop.\ 1.5. 
Since the extension $L_n/K_n$ is unramified outside $\set{p,\infty} \cup \set{l\mid \Delta}$ (\cite{106}, Chap.~VIII, Prop.~1.5 (b)), we have
\begin{equation}
\label{eq:I}
	[L_n:L_n\cap \wt{K}_n] = [L_n:L_n^I] = \# I.
\end{equation}
Using the upper bound of $\#I_l$ given in Lemma \ref{lem:local} (for $l=p$ under the condition \Tor) and Lemma \ref{lem:local_l} (for $l\neq p$), we have 
\begin{equation}
	\label{eq:I3}
	\# I 
		\le \# I_p \cdot \prod_{l\neq p,\,l\mid \Delta} \# I_l 
		\le p^{2n}\cdot p^{2 \sum_{l\neq p,\, l\mid \Delta}\nu_l}. 
\end{equation}
Finally, 
Theorem~\ref{thm:main} follows from the following inequalities: 
\begin{align*}
	\# \Cl_p(K_n) & \ge\frac{[L_n:K_n]}{[L_n:L_n\cap \wt{K}_n]} \quad(\mbox{by \eqref{eq:An}})\\
	&=\frac{p^{2nr}}{\#I} \quad (\mbox{by \eqref{eq:deg} and \eqref{eq:I}})\\
	&\ge p^{2n(r - 1) - 2 \sum_{l\neq p,\, l\mid \Delta } \nu_l} \quad(\mbox{by \eqref{eq:I3}} ).
\end{align*}
%% The end of the proof
\end{proof}

\providecommand{\bysame}{\leavevmode\hbox to3em{\hrulefill}\thinspace}
\providecommand{\href}[2]{#2}

%\bigskip\noindent
%Toshiro Hiranouchi \\
%Department of Mathematics, Graduate School of Science, Hiroshima University
%1-3-1 Kagamiyama, Higashi-Hiroshima, 739-8526 JAPAN\\
%Email address: \texttt{hira@hiroshima-u.ac.jp }

\bigskip\noindent
Toshiro Hiranouchi 

\noindent
Department of Basic Sciences, 
Graduate School of Engineering, 
Kyushu Institute of Technology
1-1 Sensui-cho, Tobata-ku, Kitakyushu-shi, 
Fukuoka, 804-8550, JAPAN

\noindent
Email address: \texttt{hira@mns.kyutech.ac.jp}

\end{document}